\newcommand\HL{{\sf HL}}
\newcommand\HR{{\sf HR}}
\newcommand\Ho{{\sf H}}
\newcommand\x\times
\newcommand\ot\otimes
\newcommand\ox\otimes
\newcommand\G\Gamma
\newcommand\La\Lambda
\newcommand\la\lambda
\newcommand\g{{\mathcal L}}
\newcommand\fa{{\mathfrak a}}
\newcommand\gf{{\mathfrak g}}
\newcommand\fh{{\mathfrak h}}
\newcommand\fm{{\mathfrak m}}
\newcommand\fV{{\mathfrak R}}
\newcommand\ab{\mathsf{ab}}
\newcommand\ann{\mathsf{ann}}
\newcommand\sLIE{\mathscr{L\!I\!E}}
\newcommand\Lie{\mathsf{Lie}}
\newcommand\xto[1]{\xrightarrow[]{#1}}
\DeclareMathOperator\fLIE{\mathfrak{LIE}}
\DeclareMathOperator\fLB{\mathfrak{LB}}
\DeclareMathOperator\Hom{\mathsf{Hom}}
\newtheorem{De}{Definition}
\newtheorem{Th}[De]{Theorem}
\newtheorem{Pro}[De]{Proposition}
\newtheorem{Le}[De]{Lemma}
\newtheorem{Co}[De]{Corollary}
\begin{document}

\

\title{On certain class of Leibniz algebras}

\author[M. Jibladze]{Mamuka Jibladze}

\address{Razmadze Mathematical Institute, Tbilisi, Georgia}

\author[T. Pirashvili]{Teimuraz  Pirashvili}

\address{Department of Mathematics. University of Leicester. UK}
\maketitle

\section{Introduction} The paper has two goals. First goal is to give a complete proof (see  Theorem \ref{rva.15.26} ) of an old and unpublished  result of Mar\'ia Ronco \cite{ronco}, which describes free objects in the category of Leibniz algebras satisfying the identity:
$$[[x,x],y]=0.$$
We call such algebras as Ronco algebras. Our interest with theory of Ronco algebras comes from the fact that it is a minimal quotient of the theory of Leibniz algebras, which is a linear extension of the theory of Lie algebras in the sense of \cite{JP}.  Our second goal is  to extend relationship between symmetric Leibniz algebras and Lie $\Sigma\mu$-algebras  (see
 \cite[Theorem 11 ]{JPS})  to Ronco algebras and so called Lie $\mu$-algebras, see Theorem \ref{14.15.30}. 

\section{Preliminaries on  second Leibniz homology} In what follows $K$ is a field of characteristic not equal 2.  All vector spaces  are taken over $K$.  Moreover, instead $\ot_K$ and ${\sf Hom}_K$ we will simply write $\ot$ and $\Hom$.

 Recall that \cite{UL} a {\bf Leibniz algebra} is a vector spaces  $\g$, equipped with an operation $[-,-]:\g\ox\g\to\g$ such that 
\begin{equation}\label{R}
[x,[y,z]]=[[x,y],z]-[[x,z],y].
\end{equation}

 We refer the reader to \cite{UL},\cite{Lrep},\cite{HL},\cite{conj}, \cite{perfect},\cite{subcpx}   for more on Leibniz algebras, Leibniz homology, Leibniz representations and some conjectures about Leibniz algebras and Leibniz homology.

The following  identities are consequences of   (\ref{R}):
\begin{equation}\label{an}[x,[y,y]]=0,  \quad [x,[y,z]]+[x,[z,y]]=0.\end{equation}

The category of Leibniz algebras is denoted by $\fLB$.  It is clear that  Lie algebras are those Leibniz algebras $\g$ for which the identity  $[x,x]=0$ holds. Denote by $\g^{\sf ann}$ the subspace of $\g$ generated by elements $[x,x]$, $x\in \g$. Then $\g^{\sf ann}$ is a two-sided ideal of $\g$  \cite{UL} and the quotient $\g_\Lie$ is a Lie algebra. Moreover the assignment $\g\mapsto \g_\Lie$ defines the left adjoint functror to the inclusion of the category $\fLIE$ of Lie algebras in $\fLB$ \cite{UL}. 
It follows that for any Leibniz algebra $\g$ one has  an exact sequence
\begin{equation}\label{s2lie}{\sf Sym}^2(\g)\xto{\mu} \g\to \g_\Lie\to 0\end{equation}
where ${\sf Sym}^2$ denotes the second symmetric power and $\mu(x\odot y)=[x,y]+[y,x], x,y\in\ g.$ 
Here $x\odot y$ denotes the image of $x\otimes y$ in ${\sf Sym}^2(\g)$ under the canonical map $\g\otimes\g\to {\sf Sym}^2(\g)$.

We refer to \cite{UL} for definition of the  homology and cohomology of Leibniz algebras. Here we will need mostly the first and the  second homologies, which can be defined by
$${\HL}_1(\g)=\g_{\sf ab}=\g/[\g,\g].$$
$${\HL}_2(\g)=\frac{Ker([-,-]:\g\ot\g\to \g)}{{Im(d:\g\ot \g\ot \g\to \g\ot\g)}}$$
where  $$d(x\ot y \ot z)=[x,y]\ot z-[x,z]\ot y-x\otimes [y,z]$$
Recall that \cite{UL} for any vector space $\fa$ considered as an abelian Leibniz algebra, the vector space $\Hom(\HL_2(\g),\fa)$ classifies all central extensions of $\g$ by $\fa$. From this fact and the classical Yoneda Lemma we obtain the following well-known result.

\begin{Le}\label{1} For any Leibniz algebra $\g$ there exists a central extension
$$0\to \HL_2(\g)\to \widehat{\g}\xto{p} \g\to 0$$ with properties:

(i) The homomorphism $p$ induces an isomorphism $\widehat{\g}_{\sf ab}\cong \g_{\sf ab}$, where as usual $\g_{\sf ab}=\g/[\g,\g]$.

(ii)  For any central extension $$0\to \fa\to \fh\to \g\to 0$$ there exist a  commutative diagram of Leibniz algebras and Leibniz algebra homomorphisms
$$\xymatrix{&& \widehat{\g}\ar[d]_{f} \ar[r]^p& \g\ar[d]^{id_\g}\ar[r]&0\\
0\ar[r]&\fa \ar[r]&\fh\ar[r]&\g\ar[r]&0}$$ 

(iii) If $f':\widehat{\g}\to \fh$ also  fits in the commutative diagram, then there exist a unique linear map $\gamma:\g_{\sf ab} \to \fa$ such that $f_1=f+\gamma\circ p$. In particular there exist a unique $\alpha$ in the commutative diagram 
$$\xymatrix{0\ar[r]& \HL_2(\g)\ar[d]^\alpha\ar[r]& \widehat{\g}\ar[d]_{f} \ar[r]^p& \g\ar[d]^{id_\g}\ar[r]&0\\
0\ar[r]&\fa \ar[r]&\fh\ar[r]&\g\ar[r]&0}$$ 
\end{Le}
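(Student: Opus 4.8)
The plan is to realise $\widehat\g$ as the universal element of the functor of central extensions and then to read off (i)--(iii) from the Yoneda correspondence. First I would fix $\g$ and introduce the functor $F$ sending a vector space $\fa$, regarded as an abelian Leibniz algebra, to the set of equivalence classes of central extensions of $\g$ by $\fa$; this is covariant in $\fa$ via push-out of extensions. The fact quoted just above the lemma provides, for each $\fa$, a bijection which is natural in $\fa$, i.e.\ a natural isomorphism $\theta\colon F\xrightarrow{\ \cong\ }\Hom(\HL_2(\g),-)$. Applying the Yoneda lemma to the representing object $\HL_2(\g)$, the inverse transformation $\theta^{-1}$ is determined by the single element $\theta^{-1}_{\HL_2(\g)}(\id)\in F(\HL_2(\g))$, and I take $\widehat\g$ to be a central extension representing this class, so that by construction $\theta([\widehat\g])=\id_{\HL_2(\g)}$.

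Next I would deduce (ii) together with the map $\alpha$ appearing in the last diagram of (iii). Given a central extension $0\to\fa\to\fh\to\g\to0$ of class $c\in F(\fa)$, I set $\alpha=\theta_\fa(c)\in\Hom(\HL_2(\g),\fa)$. Naturality of $\theta$ gives $\theta(\alpha_*[\widehat\g])=\alpha\circ\id=\alpha=\theta_\fa(c)$, whence $\alpha_*[\widehat\g]=c$ and $\fh$ is equivalent to the push-out of $\widehat\g$ along $\alpha$. The canonical morphism of extensions attached to a push-out then supplies the Leibniz homomorphism $f\colon\widehat\g\to\fh$ lying over $\id_\g$ and restricting to $\alpha$ on $\HL_2(\g)$, which is exactly the required (extended) ladder.

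For the uniqueness in (iii) I would argue directly. If $f'$ is a second homomorphism over $\id_\g$, then $g:=f-f'$ takes values in $\fa$, since both composites to $\g$ equal $p$; and a short computation using centrality of $\fa$ shows $g([x,y])=[fx,fy]-[f'x,f'y]=0$, as on writing $fx=f'x+gx$ every cross term $[gx,-]$ and $[-,gy]$ vanishes. Hence $g$ annihilates $[\widehat\g,\widehat\g]$ and factors through $\widehat\g_{\ab}$; composing with the isomorphism $\widehat\g_{\ab}\cong\g_{\ab}$ of (i) yields a unique linear $\gamma\colon\g_{\ab}\to\fa$ with $f'=f+\gamma\circ p$, uniqueness being forced by surjectivity of $p$.

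The genuine work is (i), i.e.\ the inclusion $\HL_2(\g)\subseteq[\widehat\g,\widehat\g]$, equivalent to injectivity of $p_{\ab}$ (surjectivity being clear); this is the one point that is not formal, and I expect it to be the main obstacle. I would prove it by contradiction: a functional $\lambda\colon\HL_2(\g)\to K$ vanishing on $\HL_2(\g)\cap[\widehat\g,\widehat\g]$ extends to a linear map $\tilde\lambda\colon\widehat\g\to K$ killing $[\widehat\g,\widehat\g]$, hence to a Leibniz homomorphism into abelian $K$; this $\tilde\lambda$ retracts the push-out $\lambda_*\widehat\g$ onto its centre, so $\lambda_*\widehat\g$ splits and has class $0$. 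But naturality gives $\theta(\lambda_*\widehat\g)=\lambda\circ\id=\lambda$, forcing $\lambda=0$ and hence $\HL_2(\g)\subseteq[\widehat\g,\widehat\g]$. Alternatively, expressing the classifying map through the $2$-cocycle $c(x,y)=[sx,sy]-s[x,y]$ of a linear section $s$, the identity $\theta([\widehat\g])=\id$ reads $[\xi]=\sum_i[sx_i,sy_i]\in[\widehat\g,\widehat\g]$ for every class $[\xi]=[\sum_i x_i\ot y_i]$, giving the inclusion at once.
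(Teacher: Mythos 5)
Your proposal is correct and follows essentially the route the paper itself takes: the paper's entire ``proof'' is the remark that $\Hom(\HL_2(\g),\fa)$ classifies central extensions of $\g$ by $\fa$ together with an appeal to the Yoneda Lemma, and your write-up is a sound elaboration of exactly that representability argument (universal element, push-outs for (ii), the central-difference computation for (iii), and the duality/cocycle argument for (i)).
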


In this paper we are mostly interested in the case, when $\gf$ is a Lie algebra. Leibniz homology of Lie algebras was investigated in \cite{HL}. We will need some results from that paper. In order to state them we set
$$\HR_0(\gf)=\Ho_0(\gf,{\sf Sym^2}(\gf)).$$
In other words $\HR_0(\gf)$ is the quotient of ${\sf Sym^2}(\gf)$ by the relation $x\odot [y,z]=[x,y]\odot z$.

Observe that for abelian Lie algebra $\gf$ one has $\HR_0(\gf)={\sf Sym^2}(\gf)$. Hence for any Lie algebra $\gf$ the abelization map $\gf\to\gf_{\sf ab}$ induces the homomorphism
$$\HR_0(\gf)\to{\sf Sym}^2(\gf_{\sf ab})$$

\begin{Le} Let  $\gf$ be a  free nil$_2$-class Lie algebra generated by  a vector space $V$. That is 
$$\gf=V\bigoplus \Lambda^2(V), \  [u,v]=u\wedge  v,  \ {\rm and}\  [v,w]=0, [w,w']=0, u,v\in V, w,w'\in \Lambda^2(V), $$
Then one has en exact sequence 
$$0\to \Lambda ^3(V)\to \HR_0(\gf)\to {\sf Sym}^2(V)\to 0.$$
\end{Le}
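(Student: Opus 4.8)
The plan is to exploit the natural grading on $\gf$, in which $V$ sits in degree $1$ and $W:=\Lambda^2(V)$ in degree $2$. Since the bracket is homogeneous (it sends $V\otimes V$ into $W$ and vanishes otherwise), the space ${\sf Sym}^2(\gf)={\sf Sym}^2(V)\oplus(V\otimes W)\oplus{\sf Sym}^2(W)$ is graded, with the three summands sitting in degrees $2,3,4$. By definition $\HR_0(\gf)$ is the quotient of ${\sf Sym}^2(\gf)$ by the subspace $R$ spanned by the homogeneous elements $\rho(x,y,z)=x\odot[y,z]-[x,y]\odot z$. As $R$ is graded, $\HR_0(\gf)$ inherits the grading, and I will compute it degree by degree.

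First I would record that every generator $\rho(x,y,z)$ has degree at least $3$, since it involves three elements of $\gf$, each of degree $\ge 1$. Hence the degree-$2$ part receives no relations and equals ${\sf Sym}^2(V)$. In degree $4$ the only relations arise by taking one of $x,y,z$ in $W$ and the other two in $V$; a short case check, using $[W,\gf]=[\gf,W]=0$, shows each such $\rho$ reduces to a relation of the form $(x\wedge y)\odot w=0$ with $w\in W$ and $x,y\in V$. As the elements $x\wedge y$ span $W$, these relations annihilate all of ${\sf Sym}^2(W)$. In degree $3$ the relations come from $x,y,z\in V$ and read $x\odot(y\wedge z)=z\odot(x\wedge y)$; I denote by $R_1\subseteq V\otimes W$ the span of the corresponding elements.

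The heart of the argument is the identification $(V\otimes\Lambda^2(V))/R_1\cong\Lambda^3(V)$. The wedge map $V\otimes\Lambda^2(V)\to\Lambda^3(V)$, $x\otimes(y\wedge z)\mapsto x\wedge y\wedge z$, annihilates $R_1$ and so induces a surjection $\phi$. For the reverse direction I would consider the trilinear map $V\times V\times V\to (V\otimes\Lambda^2(V))/R_1$ sending $(x,y,z)$ to the class of $x\otimes(y\wedge z)$. By construction it is antisymmetric in $y,z$, while the defining relation makes it invariant under cyclic permutation of $x,y,z$; combining these two facts forces antisymmetry in the first two arguments as well, and hence, since $\mathrm{char}\,K\neq2$, the map is alternating. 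It therefore factors through $\Lambda^3(V)$, producing a map $\bar\phi$ which is inverse to $\phi$ (one checks this on generators). This is the step where I expect the only genuine subtlety to lie — keeping careful track of the permutation identities — and it is exactly here that the hypothesis $\mathrm{char}\,K\neq2$ enters, to pass from antisymmetric to alternating.

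Finally I would assemble the pieces. The computation gives $\HR_0(\gf)\cong{\sf Sym}^2(V)\oplus\Lambda^3(V)$ as graded spaces, with ${\sf Sym}^2(V)$ in degree $2$ and $\Lambda^3(V)$ in degree $3$. Since $[\gf,\gf]=W$, one has $\gf_{\sf ab}=V$, so the natural map $\HR_0(\gf)\to{\sf Sym}^2(\gf_{\sf ab})={\sf Sym}^2(V)$ induced by abelianization is precisely the projection onto the degree-$2$ summand. Its kernel is the degree-$3$ summand $\Lambda^3(V)$, which yields the desired short exact sequence $0\to\Lambda^3(V)\to\HR_0(\gf)\to{\sf Sym}^2(V)\to 0$.
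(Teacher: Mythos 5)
Your proposal is correct, and its skeleton coincides with the paper's: both decompose ${\sf Sym}^2(\gf)$ as ${\sf Sym}^2(V)\oplus(V\ot\Lambda^2(V))\oplus{\sf Sym}^2(\Lambda^2(V))$, kill the last summand via $w\odot(u\wedge v)=w\odot[u,v]=[w,u]\odot v=0$, and reduce everything to identifying $(V\ot\Lambda^2(V))/R_1$ with $\Lambda^3(V)$. The genuine difference is how that last identification is justified. The paper merely asserts that the map $u\odot(v\wedge v')\mapsto u\wedge v\wedge v'$ is an isomorphism because both sides are cubical functors, so that it suffices to check the claim for $\dim V\le 3$; you instead build the inverse by hand, showing that the trilinear map $(x,y,z)\mapsto[x\ot(y\wedge z)]$ is alternating (antisymmetry in the last two slots plus cyclic invariance from $R_1$ generate the full sign action of $S_3$) and therefore factors through $\Lambda^3(V)$. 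Your route is more elementary and self-contained, which is welcome given how terse the paper is at exactly this point; the paper's appeal to polynomial (cubical) functors is slicker but presupposes that machinery and still leaves a finite verification to the reader. One small remark: you do not actually need $\mathrm{char}\,K\ne 2$ where you invoke it, since cyclic invariance gives $f(x,x,z)=f(z,x,x)=[z\ot(x\wedge x)]=0$ directly, without dividing by $2$; this is harmless here because the standing hypothesis of the paper excludes characteristic $2$ anyway, but it is worth knowing that this step of the lemma is characteristic-free.
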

\begin{proof} Clearly 
$$ {\sf Sym}^2(\gf)= {\sf Sym}^2(V)\bigoplus V\ot \Lambda^2(V)\bigoplus  {\sf Sym}^2(\Lambda^2(V))$$
Now, take $w,w'\in \Lambda^2(V)$, assume $w'=u\wedge v$. Then
$$w \odot w'=w\odot [u,v]=[w,u]\odot v=0$$
Thus  the image of ${\sf Sym}^2(\Lambda^2(V))$ in $\HR_0(\g)$ is zero. We also have
$$u\odot v\wedge v'=u\odot [v,v']=v'\odot u\wedge v$$
where $u,v,v'\in V$. It follows that the kernel of the canonical map $\HR_0(\gf)\to {\sf Sym}^2(V)$ is the same as 
$$\frac{ V\ot \Lambda^2(V)}{u\odot v\wedge v'=v'\odot u\wedge v}$$
The last question is isomorphic to $\Lambda ^3(V)$ via the map $u\odot v\wedge v'\mapsto u\wedge v\wedge v'$. To see the last statement it suffices to check this fact in dimensions 1,2,3 because the functors in the questions are cubical.
\end{proof}

\begin{Pro}\label{pirahl} Let $\gf$ be a Lie algebra. Then following holds:

i)  There is a natural isomorphism $$\HL_2(\g)\cong \Ho_1(\g,\g^{ad}),$$
where the right hand side is the classical Lie algebra homology with coefficients in the adjoint representation.

ii)  There is a natural homomorphism
$$\HR_0(\g)\to \Ho_1(\g,\g^{ad})$$
which is an isomorphism if $\Ho_2(\g)=0=\Ho_3(\g)$. 
\end{Pro}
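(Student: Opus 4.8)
The plan is to realise both $\HL_2(\g)$ and $\Ho_1(\g,\g^{ad})$ as subquotients of $\g\ot\g$ and to compare them in place. By definition $\HL_2(\g)=\ker([-,-])/\im(d)$, where $[-,-]\colon\g\ot\g\to\g$ is the bracket and $d\colon\g\ot\g\ot\g\to\g\ot\g$ is the map of the excerpt. On the other side, the Chevalley--Eilenberg complex computing $\Ho_*(\g,\g^{ad})$ reads $\cdots\to\La^2(\g)\ot\g\xto{\partial}\g\ot\g\xto{\partial}\g$, where the last map is again $x\ot m\mapsto[x,m]$ and $\partial((x\wedge y)\ot z)=-[x,y]\ot z+x\ot[y,z]-y\ot[x,z]$, the signs being forced by $\partial^2=0$ and the Jacobi identity; hence $\Ho_1(\g,\g^{ad})=\ker([-,-])/\im(\partial)$. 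First I would note that the two numerators coincide literally, since each is the kernel of one and the same map, the bracket $\g\ot\g\to\g$. Thus part (i) reduces to the single assertion that the denominators agree, namely $\im(d)=\im(\partial)$ inside $\g\ot\g$.

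This equality of images is the heart of (i) and the only point where the Lie structure (not merely the Leibniz one) intervenes; I expect it to be the main obstacle. I would prove it by the two inclusions. A helpful preliminary is that $d$ vanishes on $\g\ot{\sf Sym}^2(\g)$ by antisymmetry of the bracket, so that $\im(d)$ is already spanned by the antisymmetrised relations $d(x\ot(y\wedge z))$. Comparing $d(x\ot(y\wedge z))$ with $\partial((x\wedge y)\ot z)$ one finds that they differ only by a symmetric tensor of the form $u\ot[v,w]+[v,w]\ot u$, and a short computation with the Jacobi identity shows that the symmetric tensors so arising lie in both images; carrying this out in both directions yields $\im(d)=\im(\partial)$ and hence the natural isomorphism of (i).

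For (ii) I would first construct the map. Since the bracket of a Lie algebra is antisymmetric, ${\sf Sym}^2(\g)$ is contained in $\ker([-,-])$, so the composite ${\sf Sym}^2(\g)\hookrightarrow\ker([-,-])\twoheadrightarrow\Ho_1(\g,\g^{ad})$ defines a linear map $x\odot y\mapsto[\,x\ot y+y\ot x\,]$. To see that it factors through $\HR_0(\g)$, that is through ${\sf Sym}^2(\g)$ modulo the relations $x\odot[y,z]=[x,y]\odot z$, I would check that the generating element $x\odot[y,z]-[x,y]\odot z$ maps into $\im(\partial)$; this is again a direct manipulation of the formula for $\partial$ (and is essentially the symmetric content of the computation used in (i)). Naturality is then clear.

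The last and hardest step is that this map is an isomorphism once $\Ho_2(\g)=0=\Ho_3(\g)$. Here I would use the decomposition $\g\ot\g={\sf Sym}^2(\g)\oplus\La^2(\g)$ of $\g$-modules and track how $\ker([-,-])$ and $\im(\partial)$ meet the two summands: the symmetric summand contributes exactly $\HR_0(\g)$, while the antisymmetric summand is governed by the truncation $\La^3(\g)\to\La^2(\g)\to\g$ of the trivial-coefficient Chevalley--Eilenberg complex, whose homology in the relevant degrees is $\Ho_2(\g)$ and $\Ho_3(\g)$. The main work, and the expected obstacle, is to organise this into a long exact sequence relating $\HR_0(\g)$, $\Ho_1(\g,\g^{ad})$, $\Ho_2(\g)$ and $\Ho_3(\g)$ (most cleanly via the long exact homology sequence of a short exact sequence of coefficient complexes assembled from ${\sf Sym}^2(\g)$ and $\La^2(\g)$); once this is in hand, the vanishing of $\Ho_2(\g)$ and $\Ho_3(\g)$ forces the map to be an isomorphism.
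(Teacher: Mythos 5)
Your reduction of part (i) to the single equality $\im(d)=\im(\partial)$ inside $\g\ot\g$ is set up correctly (the two numerators do coincide, and indeed $d(x\ot y\ot z)+\partial((x\wedge y)\ot z)=-\bigl(y\ot[x,z]+[x,z]\ot y\bigr)$), but the pivotal claim --- that every symmetric tensor $s(u,[v,w]):=u\ot[v,w]+[v,w]\ot u$ lies in both images --- is false, and with it the equality of images. Take $\g$ free on $V=\langle a,b,c\rangle$ and look at the degree-$3$ component. There $\ker([-,-])$ is $10$-dimensional, spanned by the nine tensors $s(u,[v,w])$ together with the Jacobi element $J=\bigl([a,b]\ot c-c\ot[a,b]\bigr)+\bigl([b,c]\ot a-a\ot[b,c]\bigr)+\bigl([c,a]\ot b-b\ot[c,a]\bigr)$, and one checks directly that
$$2\,d(a\ot b\ot c)=J-s(a,[b,c])-s(b,[a,c])+s(c,[a,b]),\qquad 2\,\partial((a\wedge b)\ot c)=-J+s(a,[b,c])-s(b,[a,c])-s(c,[a,b]).$$
If all nine $s(u,[v,w])$ lay in $\im(d)$, the first identity would put $J$ there too, so $\im(d)$ would be all of $\ker([-,-])$ in degree $3$ and $\HL_2(\g)$ would vanish there --- contradicting the paper's own Corollary, which gives $\HL_2(\g)\cong\bigoplus_{n\ge2}\ker({\sf Lie}_{n-1}(V)\ot V\to{\sf Lie}_n(V))$, a space of dimension $3\cdot 3-8=1$ in degree $3$. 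The second identity gives the same contradiction for $\im(\partial)$. Worse, the two images are genuinely different hyperplanes of the degree-$3$ kernel: both contain the six $s(u,[u,v])$ and the sums $s(a,[b,c])+s(b,[a,c])$, $s(b,[a,c])+s(c,[a,b])$, but modulo that common subspace $\im(d)$ is spanned by (the class of) $s(a,[b,c])+J$ while $\im(\partial)$ is spanned by $s(a,[b,c])-J$. So neither image contains the other, the identity of $\ker([-,-])$ induces no map between $\HL_2(\g)$ and $\Ho_1(\g,\g^{ad})$ in either direction, and (i) cannot be proved ``in place''; the isomorphism has to be manufactured by a different mechanism (in the cited paper \cite{HL} it is extracted from a comparison of the Loday and Chevalley--Eilenberg complexes, not from an equality of images). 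The ``short computation with the Jacobi identity'' you defer to is exactly the step that cannot be carried out.

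For part (ii), by contrast, the construction of the map does survive: what you need is not $s(x,[y,z])\in\im(\partial)$ but only $s(x,[y,z])-s([x,y],z)\in\im(\partial)$, and this holds because $\partial\bigl((x\wedge y)\ot z+(z\wedge y)\ot x\bigr)=s(x,[y,z])-s(z,[x,y])$; so ${\sf Sym}^2(\g)\to\Ho_1(\g,\g^{ad})$, $x\odot y\mapsto[x\ot y+y\ot x]$, does factor through $\HR_0(\g)$ even though the individual generators $x\odot[y,z]$ do not die. The long-exact-sequence strategy for showing this map is an isomorphism when $\Ho_2(\g)=\Ho_3(\g)=0$ is the right idea (it is essentially how \cite{HL} proceeds), but as written it is only an outline, and the main assertion of the Proposition, part (i), is not established by your argument.
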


\begin{proof} All results were proved in \cite{HL}.  
\end{proof}
\begin{Co} For a free Lie algebra $\g=\bigoplus_{n\geq 1}{\sf Lie}_n(V)$ one has
$${\HL}_2(\g)\cong \Ho_1(\g,\g^{ad})\cong \HR_0(\g)\cong \bigoplus_{n\geq 2}Ker(Lie_{n-1}(V)\ot V\to Lie_n(V))$$
\end{Co}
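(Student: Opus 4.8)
The plan is to read off the first three isomorphisms from Proposition \ref{pirahl} once the higher homology of a free Lie algebra is known to vanish, and to extract the last isomorphism from an explicit length-one free resolution over the enveloping algebra. The isomorphism $\HL_2(\gf)\cong\Ho_1(\gf,\gf^{ad})$ is Proposition \ref{pirahl}(i) and needs no hypothesis, so only $\HR_0(\gf)$ and the final kernel require work.

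To apply Proposition \ref{pirahl}(ii) I must check $\Ho_2(\gf)=0=\Ho_3(\gf)$. Here I would use the classical identification of the enveloping algebra of the free Lie algebra $\gf=\lie(V)$ with the tensor algebra, $U(\gf)=T(V)$, together with the standard resolution of the trivial module
$$0\to T(V)\ot V\xto{\partial}T(V)\xto{\epsilon}K\to 0,\qquad \partial(u\ot v)=uv,$$
whose exactness is immediate degreewise, since in weight $n$ the map $T(V)_{n-1}\ot V\to T(V)_n$ is the canonical isomorphism onto the degree-$n$ part of the augmentation ideal. As Lie homology is $\mathrm{Tor}^{T(V)}_\ast(K,-)$, this one-step resolution forces $\Ho_n(\gf)=0$ for every $n\ge 2$; in particular the hypothesis of Proposition \ref{pirahl}(ii) holds, giving $\HR_0(\gf)\cong\Ho_1(\gf,\gf^{ad})$.

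Finally I would compute $\Ho_\ast(\gf,\gf^{ad})=\mathrm{Tor}^{T(V)}_\ast(K,\gf^{ad})$ with the same resolution: tensoring it with the adjoint module collapses it to the two-term complex
$$\gf\ot V\to\gf,\qquad m\ot v\mapsto[m,v],$$
which respects the weight grading $\gf=\bigoplus_n\Lie_n(V)$. In degree $0$ this returns $\gf/[\gf,V]=\gf_{\sf ab}=V$, a consistency check, while in degree $1$ it gives $\Ho_1(\gf,\gf^{ad})=\ker(\gf\ot V\to\gf)$, splitting along the grading as $\bigoplus_{n\ge 2}\ker(\Lie_{n-1}(V)\ot V\to\Lie_n(V))$, as claimed. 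The step needing the most care is this collapse: one must track the module structure and signs to confirm that the differential is exactly the bracket map, and use that $[\Lie_{n-1}(V),V]=\Lie_n(V)$ holds in a free Lie algebra, which is what simultaneously yields $\Ho_0=V$ and fixes the grading of the kernel; the rest is routine bookkeeping with the weight grading.
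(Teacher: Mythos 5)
Your argument is correct, and it reaches the same two-term complex that drives the paper's proof, but by a genuinely different route for the two steps that need work. For the hypothesis $\Ho_2(\g)=0=\Ho_3(\g)$ of Proposition \ref{pirahl}(ii), the paper simply cites the vanishing of higher Lie algebra homology of free Lie algebras, whereas you derive it from the length-one free resolution $0\to T(V)\ot V\to T(V)\to K\to 0$ over $U(\g)\cong T(V)$; and for the last isomorphism the paper argues via derivations --- a derivation out of a free Lie algebra is determined by its restriction to $V$, giving a presentation of $H^1(\g,M)$, from which it passes ``by duality'' to the exact sequence $0\to \Ho_1(\g,M)\to V\ot M\to M$ --- while you get the same small complex directly by tensoring the resolution with $\g^{ad}$. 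Your version is more self-contained: the single resolution simultaneously yields the vanishing that the paper leaves to a citation and the identification of $\Ho_1(\g,\g^{ad})$ with the kernel of the bracket map, and it avoids the somewhat informal duality step (which, read literally, produces $H_1$ of the linear dual rather than of $M$ unless one is careful). The paper's version is lighter on homological machinery and makes the universal property of the free Lie algebra the visible engine. Two cosmetic points you already flag but should carry out: the paper's complex is $V\ot M\xto{\delta}M$ with $\delta(v\ot m)=[v,m]$, while yours (and the statement's) has $V$ on the right, the two kernels being identified by $m\ot v\mapsto -v\ot m$ via the antipode; and the weight-$1$ component of $\g\ot V$ is zero because $\Lie_0(V)=0$, which is why the direct sum starts at $n=2$ and why $\Ho_0$ comes out as $V$.
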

\begin{proof} Since Lie algebra homology vanishes on free Lie algebras in dimensions $\geq 2$ all statements except the last isomorphism follows from  Proposition \ref{pirahl}.
To prove the last isomorphism, observe that if $\g=\bigoplus_{n\geq 1}{\sf Lie}_n(V)$ is a free Lie algebra and $M$ is a $\g$-module, then any derivation $\g\to M$ uniquely defined by  a linear map $V\to M$. Hence one has an exacts sequence:
$$M\xto{\partial} {\sf Hom} (V,M)\to H^1(\g,M)\to 0$$
where $\partial(m)(v)=[v,m]$. By duality we also have an exact sequence
$$0\to H_1(\g,M)\to V\ot M\xto{\delta} M$$
where $\delta(v\ot m)=[v,m].$ This implies the isomorphism $\Ho_1(\g,\g^{ad})\cong \bigoplus_{n\geq 2}{\sf R}_n(V)$.

\end{proof}
For higher  Leibniz homology of a free Lie algebra  we refer to  \cite{HL}.

\section{Ronco algebras}
A {\bf Ronco algebra}  is a  Leibniz algebra $X$ if it is   a central extension of a  Lie  algebra. Thus if there exists a central  extension
\begin{equation} \label{ce}0\to A\to \g\to \fh\to 0,\end{equation} 
of Leibniz algebras, for which $\fh$ is a Lie algebra.
\begin{Le} A  Leibniz algebra $\g$ is a Ronco algebra, iff
\begin{equation}\label{mr}
[[x,x],y]=0.
\end{equation}
holds for all $x,y\in \g$. If this the case, then the following identity holds:
\begin{equation}\label{mrv} [[x,y],z]+[[y,x],z]=0\end{equation}
\end{Le}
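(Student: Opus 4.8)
The plan is to use the canonical ideal $\g^\ann$ as the center realizing $\g$ as a central extension of its Lie-ification $\g_\Lie$. I would split the equivalence into its two directions and then deduce (\ref{mrv}) by a polarization argument.

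For the direction that a Ronco algebra satisfies (\ref{mr}), suppose we are given a central extension $0\to A\to\g\xto{q}\fh\to 0$ with $\fh$ a Lie algebra. Applying the homomorphism $q$ to $[x,x]$ and using that $\fh$ is Lie gives $q([x,x])=[q(x),q(x)]=0$, so $[x,x]\in\ker q=A$. Since $A$ is central one has $[[x,x],y]\in[A,\g]=0$, which is exactly (\ref{mr}). For the converse I would start from the facts recalled in the text: $\g^\ann$ is the linear span of the elements $[x,x]$, it is a two-sided ideal, and the quotient $\g/\g^\ann=\g_\Lie$ is a Lie algebra. The point is that (\ref{mr}) upgrades $\g^\ann$ from an ideal to a central ideal. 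The always-valid identity (\ref{an}), $[x,[y,y]]=0$, already gives $[\g,\g^\ann]=0$, so the right action on $\g^\ann$ is automatically trivial; and (\ref{mr}), read on the spanning elements $[x,x]$ and extended by linearity in the left slot, gives $[\g^\ann,\g]=0$. Hence $0\to\g^\ann\to\g\to\g_\Lie\to 0$ is a central extension of a Lie algebra, so $\g$ is a Ronco algebra.

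Finally I would obtain the consequence (\ref{mrv}) by polarization against the map $\mu$ of (\ref{s2lie}). Expanding $[x+y,x+y]$ by bilinearity of the bracket gives $[x,y]+[y,x]=[x+y,x+y]-[x,x]-[y,y]$, so that $\mu(x\odot y)$ is a combination of three square terms and in particular lies in $\g^\ann$. Bracketing on the left with $z$ and applying (\ref{mr}) to each of those three squares yields
$$[[x,y],z]+[[y,x],z]=[[x+y,x+y],z]-[[x,x],z]-[[y,y],z]=0,$$
which is (\ref{mrv}).

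I do not expect a genuinely hard step here; the only place demanding care is the verification that $\g^\ann$ is \emph{central} and not merely an ideal. One must keep the two sides separate: centrality on the right, $[\g,\g^\ann]=0$, is free from (\ref{an}), whereas centrality on the left, $[\g^\ann,\g]=0$, is precisely where hypothesis (\ref{mr}) is used, together with linearity of the bracket in its first argument over the generating set $\{[x,x]\}$ of $\g^\ann$.
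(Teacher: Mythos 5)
Your proof is correct and follows essentially the same route as the paper: the forward direction pushes $[x,x]$ into the central kernel of a given extension by a Lie algebra, and the converse upgrades the ideal $\g^{\sf ann}$ to a central one using (\ref{an}) for $[\g,\g^{\sf ann}]=0$ and (\ref{mr}) for $[\g^{\sf ann},\g]=0$. Your polarization argument for (\ref{mrv}) is also correct and in fact supplies a step the paper's proof leaves implicit.
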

\begin{proof} We have a short exact sequence
$$0\to \g^{\sf ann}\to \g\to \g_\Lie\to 0.$$
By the relations (\ref{an}) we have $[\g,\g^{\sf ann}]=0$. Thus if $[[x,x],y]=0$ holds, then $\g^{\ann}$ is a central subalgebra of $\g$ and hence $\g$ is a Ronco algebra. Conversely, assume $\g$ is a Ronco algebra  and (\ref{ce}) is a central extension with $\fh\in \sLIE$. Then for any $x\in \g$ we have $[x,x]\in A$ and hence $[[x,x],y]\in [A,\g]=0$.
\end{proof}

Thus any symmetric Leibniz algebra \cite{JPS} is a Ronco algebra. The collection of all Ronco algebras is denoted by $\fV$. Thus $\fV$ is a variety of Leibniz algebras. Hence it posses free algebras.
\begin{Le}\label{ffv} Take a free Lie algebra $\g$ on a vector space $V$, that is $\g=\bigoplus_{n\geq 1} {\sf Lie}_n(V)$, ${\sf Lie}_1(V)=V$ and consider the central extension 
$0\to \HL_2(\g)\to \widehat{\g}\xto{p} \g\to 0$ as in Lemma \ref{1}. Choose a linear section $s:\g\to \widehat{\g}$ of $p$. If we identify $s(V)$ and $V$, then $\widehat{\g}$ is free object in $\fV$ generated by $V$. 
\end{Le}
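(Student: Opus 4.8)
The plan is to verify the universal property of a free object directly. First, $\widehat\g$ is indeed a Ronco algebra: by construction it is a central extension of the free Lie algebra $\g$, so it lies in $\fV$ by the lemma characterising Ronco algebras as central extensions of Lie algebras. Writing $\iota\colon V\to\widehat\g$ for the section $s$ followed by the identification $s(V)=V$, I must show that for every Ronco algebra $R$ and every linear map $\phi\colon V\to R$ there is a unique Leibniz homomorphism $\Phi\colon\widehat\g\to R$ with $\Phi\circ\iota=\phi$ (morphisms in $\fV$ being just Leibniz homomorphisms). I would treat uniqueness and existence separately.

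For uniqueness it suffices to prove that $V$ generates $\widehat\g$ as a Leibniz algebra, since a Leibniz homomorphism is determined by its values on a generating set. Let $W\subseteq\widehat\g$ be the Leibniz subalgebra generated by $V$. Because $\g=\bigoplus_{n\ge1}\Lie_n(V)$ is generated by $V$ as a Lie, hence Leibniz, algebra and $p$ is surjective, one has $p(W)=\g$, so $\widehat\g=W+\HL_2(\g)$. By Lemma~\ref{1}(i) the map $p$ induces an isomorphism $\widehat\g_{\ab}\cong\g_{\ab}$, which forces $\HL_2(\g)=\ker p\subseteq[\widehat\g,\widehat\g]$. Since $\HL_2(\g)$ is central, $[\widehat\g,\widehat\g]=[W,W]\subseteq W$, whence $\HL_2(\g)\subseteq W$ and therefore $W=\widehat\g$.

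For existence, start from the canonical central extension $0\to R^{\ann}\to R\xto{q}R_\Lie\to0$ of the Ronco algebra $R$, in which $R^{\ann}$ is central. By freeness of $\g$ the composite $q\circ\phi\colon V\to R_\Lie$ extends uniquely to a Lie homomorphism $\psi\colon\g\to R_\Lie$. Form the pullback $E=\g\times_{R_\Lie}R$; projection to $\g$ makes $E$ a central extension $0\to R^{\ann}\to E\to\g\to0$, centrality being inherited from $R$. Applying Lemma~\ref{1}(ii) to this extension yields a Leibniz homomorphism $f\colon\widehat\g\to E$ over $\g$, and composing with the projection $E\to R$ gives $\Phi_0\colon\widehat\g\to R$. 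For $v\in V$ the elements $\Phi_0(\iota(v))$ and $\phi(v)$ share the image $\psi(v)$ in $R_\Lie$, so they differ by an element of $R^{\ann}$.

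The remaining point, which I expect to be the crux, is to correct $\Phi_0$ so that it restricts to exactly $\phi$ on $V$, and not merely modulo the centre. Here I would invoke Lemma~\ref{1}(iii): the lifts of the pullback extension differ from $f$ precisely by maps $\gamma\circ p$ with $\gamma\colon\g_{\ab}\to R^{\ann}$ linear. For a free Lie algebra $\g_{\ab}\cong V$, so $\gamma$ may be prescribed arbitrarily on $V$; choosing $\gamma(v)=\phi(v)-\Phi_0(\iota(v))\in R^{\ann}$, replacing $f$ by $f+\gamma\circ p$ (with values in $R^{\ann}\subseteq E$) and composing again with $E\to R$ produces a Leibniz homomorphism $\Phi$ with $\Phi\circ\iota=\phi$. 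Together with the generation statement proved above, this establishes that $\widehat\g$ is the free object of $\fV$ on $V$.
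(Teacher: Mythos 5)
Your proof is correct. The existence half is essentially the paper's argument: extend $q\circ\phi$ to the free Lie algebra, pull back the central extension $0\to R^{\ann}\to R\to R_\Lie\to 0$ along that extension to get a central extension of $\g$, lift via Lemma \ref{1}(ii), and then correct by a map $\gamma\circ p$ with $\gamma\colon\g_{\ab}\cong V\to R^{\ann}$ so that the restriction to $V$ is exactly $\phi$ (the paper performs the correction after composing with the pullback projection, you perform it inside $E$; these coincide). Where you genuinely diverge is in the uniqueness half. The paper argues via functoriality of $(-)_\Lie$: two extensions $f_1,f_2$ induce the same Lie homomorphism $\g\to\fh_\Lie$ because $\g$ is free, and then Lemma \ref{1}(iii) forces $f_2-f_1=\gamma\circ p$ with $\gamma$ vanishing on $V\cong\g_{\ab}$, hence $\gamma=0$. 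You instead prove the stronger and more elementary statement that $V$ generates $\widehat\g$ as a Leibniz algebra: $p(W)=\g$ gives $\widehat\g=W+\HL_2(\g)$, the isomorphism $\widehat\g_{\ab}\cong\g_{\ab}$ of Lemma \ref{1}(i) forces $\HL_2(\g)\subseteq[\widehat\g,\widehat\g]$, and centrality of $\HL_2(\g)$ gives $[\widehat\g,\widehat\g]=[W,W]\subseteq W$, so $W=\widehat\g$. Your route has the advantage of not needing the rigidity clause (iii) of Lemma \ref{1} for uniqueness and of yielding the reusable fact that $\widehat\g$ is generated by $V$; the paper's route is shorter on the page but leans harder on the universal-central-extension formalism. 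You also explicitly verify $\widehat\g\in\fV$, which the paper leaves tacit.
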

\begin{proof}
Take $\fh\in\fV$ and a linear map $f:V\to \fh$. We have to show that $f$ has a unique extension (still denoted by $f$) $\widehat{\g}\to \fh$ which is Leibniz algebra homomorphism. First we show the uniqueness. Assume $f$ has two extensions $f_1,f_2:\widehat{\g}\to \fh$. Since $(-)_\Lie$ is a functor we obtain corresponding homomorphisms of Lie algebras $(f_1)_\Lie,(f_2)_\Lie:\g\to \fh_\Lie$. Since both of them agree on $V$ and $\g$ is free on $V$, we see that $(f_1)_\Lie=(f_2)_\Lie$. Denote this common value by $f_\Lie$. Then we have $f_2=f_1+\gamma\circ p$, thanks to Lemma \ref{1} for  a linear map $\gamma:\g_{\sf ab}\to \fa$. Since $f_1=f_2$ on $V$ and the composite $V\to \g\to \g_{\ab}$ is an isomorphism, we see that $\gamma=0$. Thus $f_=f_2$ on $\widehat{\g}$ and the uniqueness follows.

For existence, observe that $f$ has  a unique extension to  a homomorphism of Lie algebras $f':\g\to \fh_\Lie$, because $\g$ is free Lie algebra. Since $\fh\in\fV$ the short exact sequence 
$$0\to \fh^{ann} \to \fh \to \fh_\Lie\to 0$$
is a central extension of the Lie algebra $\fh_\Lie$. Thus by Lemma \ref{1} we can form  the following commutative diagram of central extensions
$$\xymatrix{0\ar[r] & \HL_2(\g)\ar[r]\ar[d] & \widehat{\g} \ar[r]^p\ar[d]^{g'} & \g \ar[r]\ar[d]^{id} & 0\\
0\ar[r] & \fa\ar[r]\ar[d]^{id} & \fh' \ar[r]\ar[d]^g & \g \ar[r]\ar[d]^{f'} &0 \\
0\ar[r] & \fa\ar[r]& \fh \ar[r] & \fh_\Lie \ar[r] &0
}$$
where $\fh'$ is the pull-back in the corresponding diagram. It follows that for any $v\in V$, we have $f(v)-gg'(v)=\in \fa$. Since $V\cong \g_{\sf ab}$ is an isomorphism, we have a unique linear map $\gamma:\g_{\sf ab}\to \fa$ for which $\gamma(v)=f(v)-gg'(v)$, $v\in V$. It follows that $gg'+\gamma\circ p$ is a Leibniz homomorphism, which extends the map $f:V\to \fh$.
\end{proof}

\begin{Co} The theory ${\sf Ronco}$ of Ronco algebras fits in the linear  extension of algebraic theories \cite{JP}: 
$$0\to \HR_0\to {\sf Ronco} \to {\sf Lie}\to 0$$
\end{Co}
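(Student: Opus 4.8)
The plan is to read the linear extension off the free-algebra computation already established. Recall from \cite{JP} that a linear extension of theories consists of an identity-on-objects, full functor $p\colon{\sf Ronco}\to{\sf Lie}$ together with a natural system $D$ on ${\sf Lie}$ whose value $D(\alpha)$ acts simply transitively on the fibre $p^{-1}(\alpha)$ of every morphism $\alpha$, the actions being compatible with composition. Both theories are single-sorted and generated by the object $1$, so all the data are controlled by the hom-objects ${\sf Ronco}(n,1)$ and ${\sf Lie}(n,1)$; by Lemma \ref{ffv} these are the free Ronco algebra $\widehat{\gf_n}$ and the free Lie algebra $\gf_n$ on $n$ generators, and $\widehat{\gf_n}$ is the universal central extension of $\gf_n$ supplied by Lemma \ref{1}.

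First I would produce $p$ as the morphism of theories induced by the Lie-ization functor $(-)_\Lie$: it is the identity on the generating object, and on ${\sf Ronco}(n,1)=\widehat{\gf_n}$ it is the canonical projection $\widehat{\gf_n}\to\gf_n$ of Lemma \ref{ffv}, whose Lie-ization returns the free Lie algebra. This $p$ is bijective on objects by construction and full because each projection $\widehat{\gf_n}\to\gf_n$ is surjective. The fibre over $\phi\in\gf_n$ is the coset $s(\phi)+\HL_2(\gf_n)$ for any linear section $s$, hence a torsor under $\HL_2(\gf_n)$; by the Corollary following Proposition \ref{pirahl} one has a natural isomorphism $\HL_2(\gf_n)\cong\HR_0(\gf_n)$, which identifies the natural system of the extension, object by object, with $\HR_0$.

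It then remains to verify the axioms of a linear extension. The essential input is that $\widehat{\gf_n}$ is a \emph{central} extension of $\gf_n$ (Lemma \ref{1}), so that the kernel $\HL_2(\gf_n)$ is annihilated by brackets on either side. Consequently the torsor action is additive and the composition-compatibility law $(\tilde\beta+e)(\tilde\alpha+d)=\widetilde{\beta\alpha}+\beta_*d+\alpha^*e$ holds. The contravariant (precomposition) maps $\alpha^*$ are exactly the functoriality of $\HR_0$, viewed as a functor on Lie algebras and evaluated on the homomorphism of free Lie algebras classifying $\alpha$; the covariant (postcomposition) maps $\beta_*$ are forced by centrality, since feeding the perturbed operations into a bracket annihilates the central corrections. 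Naturality in $n$ and under substitution then upgrades the object-wise isomorphism $\HL_2(\gf_n)\cong\HR_0(\gf_n)$ to an isomorphism of natural systems, and the corollary follows.

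The step I expect to be the main obstacle is precisely this last matching: checking that the natural system furnished by the kernels of $p$, with the functoriality dictated by composition in ${\sf Ronco}$, agrees \emph{as a natural system}---not merely object-wise---with $\HR_0$, and that the \cite{JP} bilinearity axiom is satisfied. This reduces to tracking how central elements transform under substitution of free generators, which is governed by the centrality of $\widehat{\gf_n}$ together with the naturality already asserted in Lemma \ref{1}; once that bookkeeping is in place, the identification $\HL_2\cong\HR_0$ for free Lie algebras delivers the result.
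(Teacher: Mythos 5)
Your proposal is correct and follows essentially the same route the paper intends: the corollary is read off from Lemma \ref{ffv} (the free Ronco algebra on $n$ generators is a central extension of the free Lie algebra $\gf_n$ with kernel $\HL_2(\gf_n)$) together with the isomorphism $\HL_2(\gf_n)\cong\HR_0(\gf_n)$ from the Corollary to Proposition \ref{pirahl}, centrality supplying the linear-extension axioms. The paper leaves all of this implicit, so your explicit verification of fullness, the torsor structure on the fibres, and the compatibility of the natural-system actions with composition is exactly the bookkeeping being omitted (only note that $\widehat{\gf_n}$ is universal merely in the weak sense of Lemma \ref{1}(ii)--(iii), not a universal central extension in the classical perfect-algebra sense).
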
 



Recall that \cite{UL} the free Leibniz algebra generated by  a vector space $V$ is 
$${\sf Leib}(V)=V\bigoplus V^{\ot 2} \bigoplus V^{\ot 3}\bigoplus \cdots =\bigoplus_{n\geq 1} V^{\ot n},$$
where the bracket is uniquely defined by the property $$[\omega,v]=\omega \ot v$$
Here $\omega\in V^{\ot n}$ and $v\in V$.  

The following result first was obtained by Mar\'ia Ronco \cite{ronco} in her unpublished notes written arround 1995.

\begin{Th}\label{rva.15.26} Let $V$ be a vector space. Then there exists a unique bracket $-,-]$ on 
$${\fV}(V)= V\bigoplus_{n\geq 1}  {\sf Lie}_{n}(V)\ot V$$
such that the canonical map ${\sf Leib}(V)\to \fV(V)$, induced by the projection $$V^{\ot n}\to {\sf Lie}_{n-1}(V)\ot V, v_1\ot \cdots \ot v_n\mapsto \{\{ v_1,v_2\},\cdots v_{n-1}\}\ot v_n$$ preserves the bracket. Moreover ${\fV}(V)$ is a free object Ronco algebra  generated by $V$. Here $\{-,-\}$ is the bracket on ${\sf Lie}(V)$.
\end{Th}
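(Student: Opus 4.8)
The plan is to realize $\fV(V)$ as a quotient of the free Leibniz algebra and to transport all the structure through that quotient. Write $\ell_n\colon V^{\ot n}\to{\sf Lie}_n(V)$ for the left-normed bracketing $v_1\ot\cdots\ot v_n\mapsto\{\{\cdots\{v_1,v_2\},\cdots\},v_n\}$ and put $J_n=\ker(\ell_n)$. Since $v_1\ot\cdots\ot v_n=[[\cdots[v_1,v_2],\cdots],v_n]$ in ${\sf Leib}(V)$, the map $\ell_n$ is exactly the degree-$n$ component of the canonical projection ${\sf Leib}(V)\to{\sf Leib}(V)_\Lie={\sf Lie}(V)$ (which is free since $(-)_\Lie$ preserves free objects), whence ${\sf Leib}(V)^{\ann}=\bigoplus_{n\ge2}J_n$. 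Under the identification $V^{\ot n}=V^{\ot(n-1)}\ot V$ the stated map $\pi$ is the identity in degree $1$ and $\ell_{n-1}\ot\id_V$ in each degree $n\ge2$; it is surjective and $\ker(\pi)=\bigoplus_{n\ge2}J_{n-1}\ot V$. Because $\pi$ is surjective, there is at most one bracket on $\fV(V)$ for which $\pi$ is multiplicative, and such a bracket exists exactly when $\ker(\pi)$ is a two-sided ideal; so existence and uniqueness of the bracket both reduce to this one ideal-theoretic statement.

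The workhorse computation is $\ell_n(J_{n-1}\ot V)=\{\ell_{n-1}(J_{n-1}),V\}=\{0,V\}=0$; it shows at once that $J_{n-1}\ot V\subseteq J_n$, that is, $\ker(\pi)\subseteq{\sf Leib}(V)^{\ann}$. For the right ideal property, rewriting (\ref{R}) as $R_{[a,b]}=[R_b,R_a]$ (with $R_c=[-,c]$) shows every right multiplication is a composite of generator multiplications $R_v$, and $\pi(R_v\zeta)=\ell(\zeta)\ot v=0$ for $\zeta\in\ker(\pi)$, so $R_v(\ker\pi)\subseteq\ker\pi$ and $\ker(\pi)$ is a right ideal. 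For the left ideal property, the identity $[x,[y,y]]=0$ from (\ref{an}) shows that $L_b=[b,-]$ annihilates ${\sf Leib}(V)^{\ann}$ and hence factors through ${\sf Leib}(V)_\Lie$; since $\ker(\pi)\subseteq{\sf Leib}(V)^{\ann}$, this forces $[b,\zeta]=0$ for every $\zeta\in\ker(\pi)$. Therefore $\ker(\pi)$ is a two-sided ideal, the quotient bracket exists and is unique, and $\fV(V)={\sf Leib}(V)/\ker(\pi)$.

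It remains to prove freeness. The same computation gives $R_v({\sf Leib}(V)^{\ann})\subseteq\ker(\pi)$, and iterating (using that $\ker(\pi)$ is a right ideal) yields $[{\sf Leib}(V)^{\ann},\eta]\subseteq\ker(\pi)$ for every $\eta$; since $[\omega,\omega]\in{\sf Leib}(V)^{\ann}$, every instance $[[\omega,\omega],\eta]$ of the Ronco relation lies in $\ker(\pi)$. Thus $\fV(V)$ is a Ronco algebra, the relation ideal satisfies $I_R\subseteq\ker(\pi)$, and the free Ronco algebra $F(V)={\sf Leib}(V)/I_R$ surjects onto $\fV(V)$ by a graded homomorphism fixing $V$. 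Finally I would compare graded dimensions: by Lemma~\ref{ffv} together with the Corollary computing $\HL_2$ of a free Lie algebra, the degree-$n$ part of $F(V)$ has dimension $\dim{\sf Lie}_n(V)+\dim{\sf R}_n(V)$, where ${\sf R}_n(V)=\ker({\sf Lie}_{n-1}(V)\ot V\to{\sf Lie}_n(V))$, while the short exact sequence $0\to{\sf R}_n(V)\to{\sf Lie}_{n-1}(V)\ot V\to{\sf Lie}_n(V)\to0$ shows the degree-$n$ part of $\fV(V)$ has the same dimension. A graded surjection with equal dimensions in each degree is an isomorphism (reducing to finite-dimensional $V$ by passing to the colimit over its finite-dimensional subspaces), so $\fV(V)$ is the free Ronco algebra on $V$ and $\pi$ is the canonical projection.

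I expect the main obstacle to be the left ideal property: taken at face value, controlling $[b,\zeta]$ for $\zeta\in\ker(\pi)$ looks like it needs a painful induction unwinding the Leibniz identity, and the clean way through is the pair of observations that $L_b$ kills the whole annihilator ideal (a formal consequence of $[x,[y,y]]=0$) and that $\ker(\pi)$ already lies inside that ideal. Once this is secured the Ronco identity and freeness follow almost formally, the only substantial imported ingredient being the size of $\HL_2$ of a free Lie algebra recorded in the Corollary.
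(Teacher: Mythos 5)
Your proof is correct, but it reaches the theorem by a genuinely different route than the paper, and it is worth seeing where the two diverge. The paper never verifies directly that $\ker(\pi)$ is an ideal: it takes the canonical surjection $f\colon{\sf Leib}(V)\to\widehat{\g}$ onto the abstract free Ronco algebra of Lemma~\ref{ffv}, uses Lemma~\ref{m.r.} to see that $f$ kills each $J_{n-1}\ot V$ (every $\zeta\in J_{n-1}$ dies in ${\sf Leib}(V)_\Lie$, so $f([\zeta,v])=0$), factors $f$ through a linear map $g\colon\fV(V)\to\widehat{\g}$, and then shows $g$ is an isomorphism of central extensions of $\g$ because the induced map $\bigoplus_n{\sf R}_n(V)\to\HL_2(\g)$ on kernels is the isomorphism of the Corollary; the bracket on $\fV(V)$ and its freeness are then transported through $g$. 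You instead build the bracket by hand --- your decomposition of the ideal condition into ``$\ker(\pi)\subseteq{\sf Leib}(V)^{\ann}$, left multiplications kill the annihilator, right multiplications reduce to generators via $R_{[a,b]}=[R_b,R_a]$'' is exactly the right mechanism --- and then run the comparison in the opposite direction, from the free Ronco algebra $F(V)$ onto $\fV(V)$, closing with a graded dimension count. Both finishes consume the same imported ingredient ($\HL_2$ of a free Lie algebra), and both are equally informal about the analogous point: your count needs the grading on $F(V)\cong\widehat{\g}$ to place ${\sf R}_n(V)$ in degree $n$ (and needs $F(V)$ and the surjection to be graded at all), just as the paper needs the induced vertical map on kernels to agree with the natural isomorphism; either way one appeals to naturality in $V$. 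One remark you may find useful: having shown $I_R\subseteq\ker(\pi)$, you could have avoided the homology entirely by applying Lemma~\ref{m.r.} to the quotient map ${\sf Leib}(V)\to F(V)$ itself --- it gives $[\zeta,v]\in I_R$ for every $\zeta\in J_{n-1}$ and $v\in V$, i.e.\ $\ker(\pi)\subseteq I_R$, whence $\ker(\pi)=I_R$ and $\fV(V)=F(V)$ with no dimension count and no use of $\HL_2$ at all.
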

\begin{proof}  We keep the notation from Lemma \ref{ffv}. Thus $\g$ denotes the free Lie algebra generated by $V$ and $\widehat{\g}$ is the free algebra in $\fV$ generated by $V$. Then we have a canonical epimorphism $f:{\sf Leib}(V)\to \widehat{\g}$. Thanks to Lemma (\ref{m.r.}) below the map $f$ factors through the map
$$g:\fV(V)\to \widehat{\g}.$$
By construction of $\fV(V)$ and by Lemma \ref{ffv} this map fits in the following diagram

$$\xymatrix{0\ar[r] & \bigoplus_{n\geq 1} {\sf R}_n(V) \ar[r]\ar[d] & \fV(V) \ar[r]^p\ar[d]^{g} & \g \ar[r]\ar[d]^{id} & 0\\
0\ar[r]  & \HL_2(\g)\ar[r]&\ar[r] \widehat{\g} \ar[r]& \g \ar[r] &0}
$$
By Proposition \ref{pirahl} the first vertical map is an isomorphism. Hence $g$ is also an isomorphism and the reslult follows.

\end{proof}

\begin{Le}\label{m.r.} Let $f:\g\to \g'$ be  a Leibniz algebra homomorphism. If $\g'\in \fV$ and $x\in Ker(\g\to \g_{Lie})$, then $f([x,y])=0$ for all $y\in\g$.
\end{Le}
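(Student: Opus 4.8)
The plan is to reduce the statement to the single fact that, in a Ronco algebra, the annihilator ideal is killed \emph{on the left} by the whole algebra.

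First I would unwind the hypothesis on $x$. By definition $\g_{\Lie}=\g/\g^{\ann}$, so the condition $x\in\mathrm{Ker}(\g\to\g_{\Lie})$ is precisely $x\in\g^{\ann}$; since $\g^{\ann}$ is the linear span of the elements $[a,a]$, I may write $x=\sum_i\lambda_i[a_i,a_i]$ with $a_i\in\g$ and $\lambda_i\in K$. The next step is to note that a Leibniz homomorphism preserves annihilators: because $f([u,v])=[f(u),f(v)]$ for all $u,v$, we have $f([a,a])=[f(a),f(a)]\in(\g')^{\ann}$, and hence by linearity $f(x)=\sum_i\lambda_i[f(a_i),f(a_i)]\in(\g')^{\ann}$.

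Finally I would invoke that $\g'\in\fV$. The defining Ronco identity $(\ref{mr})$ reads $[[z,z],y]=0$, and by bilinearity of the bracket this propagates to $[w,y]=0$ for every $w$ in the span $(\g')^{\ann}$ and every $y\in\g'$. Applying this to $w=f(x)$ and to $f(y)$ gives $f([x,y])=[f(x),f(y)]=0$, as claimed.

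I do not expect a genuine obstacle here; the content is essentially bookkeeping. The one point that needs care is the direction of the bracket. In the convention of $(\ref{R})$ the identity $(\ref{an})$, which holds in \emph{every} Leibniz algebra, only yields $[\g,\g^{\ann}]=0$, i.e. the annihilator is annihilated from the right. It is therefore essential that $f(x)$ occupies the left slot of $[f(x),f(y)]$, so that the genuinely Ronco identity $(\ref{mr})$ — and not the automatic identity $(\ref{an})$ — is what applies; keeping the two slots straight is the only thing to watch.
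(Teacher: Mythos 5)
Your proof is correct and takes essentially the same route as the paper's: both identify $\mathrm{Ker}(\g\to\g_{\Lie})$ with $\g^{\ann}$, write $x$ as a linear combination of generators of that ideal, push through $f$, and kill $[f(x),f(y)]$ using the Ronco identity in $\g'$. The only difference is immaterial in characteristic $\neq 2$: the paper decomposes $x$ into terms $[u,v]+[v,u]$ via the sequence (\ref{s2lie}) and applies the polarized identity (\ref{mrv}), whereas you decompose into squares $[a,a]$ and apply (\ref{mr}) directly; your closing observation that it is the left slot of the bracket that matters (so that (\ref{mr}) rather than the automatic identity (\ref{an}) is what is being used) is exactly the right point of care.
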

\begin{proof} By the exact sequence (\ref{s2lie}), $x$ is a linear combination of elements of the form $[u,v]+[v,u]$. Hence the result follows from the identity (\ref{mrv}).
\end{proof}

\section{$\mu$-algebras}
\begin{De}\label{detheta} A  $\mu$-algebra is a  vector space  $\fm$  equipped with two binary operations 
$\mu:\fm\ox\fm\to \fm$  and $\lbrace -,-\rbrace:\fm\ox\fm\to \fm$ such that the following identities hold
\begin{itemize}
	
	\item [i)] $xy=yx$,
	
	\item [ii)] $x(yz)=0=(xy)z$
	
	\item [iii)] $ \lbrace x,x\rbrace =0$ and hence $ \lbrace x,y\rbrace  + \lbrace y,x\rbrace =0$,
	
	\item [iv)] $ \lbrace xy,z\rbrace =0$ 	

           \item [v)] $\lbrace x, \lbrace y,z\rbrace\rbrace+\lbrace z, \lbrace x,y\rbrace\rbrace+\lbrace y, \lbrace z,x\rbrace\rbrace=
x\lbrace y,z\rbrace$
		
	\end{itemize}
Here $xy=\mu(x\ox y)$.
\end{De}

A $\mu$-algebra is \emph{symmetric} if $x \lbrace y,z\rbrace=0$. So, in this case $ \lbrace -,-\rbrace$ is a Lie algebra structure. These algebras  were studied in \cite{JPS}.

\begin{Le}\label{skew} The function $(x,y,z)\mapsto x \lbrace y,z\rbrace$ is skew-symmetric. In particular, 
$x \lbrace x,y\rbrace =0$ and hence $x\lbrace y,z\rbrace+y\lbrace x,z\rbrace=0$.
\end{Le}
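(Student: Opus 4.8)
The plan is to exploit a cyclic symmetry that is hidden in axiom (v). Write $F(x,y,z)=x\{y,z\}$ for the trilinear function under study. First I would record the symmetry that is immediate: by (iii) the bracket is antisymmetric, so $F(x,y,z)=-F(x,z,y)$, i.e.\ $F$ is already skew in its last two arguments. The substantive new input is the observation that the left-hand side of (v), the ``Jacobiator'' $\{x,\{y,z\}\}+\{z,\{x,y\}\}+\{y,\{z,x\}\}$, is manifestly invariant under the cyclic permutation $x\mapsto y\mapsto z\mapsto x$, since its three summands are merely permuted among themselves. As (v) identifies this Jacobiator with $F(x,y,z)$, it follows at once that $F$ is cyclically invariant: $F(x,y,z)=F(y,z,x)=F(z,x,y)$.

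Next I would combine the two symmetries. The $3$-cycles together with any single transposition generate $S_3$, and $F$ is invariant under the $3$-cycles while sign-reversing under the transposition of its last two slots; since $3$-cycles are even and a transposition is odd, these are consistent with $F$ transforming by the sign character of $S_3$, that is, with $F$ being skew-symmetric. Concretely, to obtain the remaining transposition one writes $F(y,x,z)=F(x,z,y)$ by cyclic invariance and then $F(x,z,y)=-F(x,y,z)$ by antisymmetry in the last two slots, yielding $F(y,x,z)=-F(x,y,z)$.

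Finally the two ``in particular'' assertions drop out. The identity $x\{y,z\}+y\{x,z\}=0$ is precisely $F(x,y,z)+F(y,x,z)=0$, the first-two-slots transposition just established. For $x\{x,y\}=0$ I would set the first two arguments equal: skew-symmetry gives $F(x,x,y)=-F(x,x,y)$, and since $\mathrm{char}\,K\neq 2$ this forces $F(x,x,y)=0$. I do not expect a genuine obstacle here; the only points requiring care are the passage from ``invariant under $3$-cycles and antisymmetric in two slots'' to ``alternating'', and the use of $\mathrm{char}\,K\neq 2$ in the last step. The essential idea is simply to notice that (v) presents $x\{y,z\}$ as a cyclically symmetric expression.
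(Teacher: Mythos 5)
Your argument is correct and is precisely the one the paper intends: its proof of Lemma \ref{skew} is the single line ``this is a direct consequence of v)'', and your write-up simply fills in the details — cyclic invariance of the Jacobiator plus antisymmetry of $\lbrace-,-\rbrace$ generate the full sign representation of $S_3$, with $\mathrm{char}\,K\neq 2$ used to pass from $F(x,x,y)=-F(x,x,y)$ to $F(x,x,y)=0$. No discrepancy with the paper's approach.
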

\begin{proof} This is a direct consequence of v).
\end{proof}

\begin{Th}\label{14.15.30} The category of $\mu$-algebras is equivalent to the category of Ronco algebras, under this equivalence symmetric $\mu$-algebra corresponds to symmetric Leibniz algebras.
\end{Th}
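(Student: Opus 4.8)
The plan is to exhibit two mutually inverse functors that are the identity on underlying vector spaces, so that the entire content is the matching of the two algebraic structures; since $\operatorname{char}K\neq 2$ this reduces to splitting the Leibniz bracket into its symmetric and antisymmetric parts. Starting from a Ronco algebra $\g$ I would put
$$xy:=[x,y]+[y,x]=\mu(x\odot y),\qquad \{x,y\}:=[x,y]-[y,x],$$
and starting from a $\mu$-algebra $\fm$ I would set $[x,y]:=\tfrac12\bigl(xy+\{x,y\}\bigr)$. Because $[x,y]+[y,x]=xy$, $[x,y]-[y,x]=\{x,y\}$, and $\tfrac12(xy+\{x,y\})$ returns the original bracket, these two assignments are inverse to each other at the level of structures; and a linear map preserves $[-,-]$ precisely when it preserves both $\mu$ and $\{-,-\}$, so the correspondence is functorial and a bijection on morphisms.

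The first thing to verify is that the symmetric/antisymmetric decomposition of a Ronco algebra is a $\mu$-algebra. The key observation is that $xy=\mu(x\odot y)$ is central: it is killed on the left by every element by the second identity in (\ref{an}), and on the right by (\ref{mrv}). Axioms i) and iii) are formal, and centrality of $\mu$-products gives ii) and iv) immediately. The only real computation is axiom v). I would expand each of its three summands using $[x,\{y,z\}]=2[x,[y,z]]$ and $[\{y,z\},x]=2[[y,z],x]$ (consequences of (\ref{an}) and (\ref{mrv})), then rewrite every left bracket by the Leibniz identity (\ref{R}); after collecting terms and a final use of (\ref{R}) the left-hand side is seen to equal $x\{y,z\}$.

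Next I would check that, conversely, $[x,y]=\tfrac12(xy+\{x,y\})$ makes a $\mu$-algebra into a Ronco algebra. For the Leibniz identity (\ref{R}) I would expand all three brackets into their $\mu$- and bracket-parts: every term carrying a triple $\mu$-product or a bracket of a $\mu$-product vanishes by ii) and iv); the surviving antisymmetric terms assemble into the Jacobiator $\{x,\{y,z\}\}+\{z,\{x,y\}\}+\{y,\{z,x\}\}$, which axiom v) rewrites as $x\{y,z\}$; and the surviving mixed terms, each of the form $a\{b,c\}$, sum to $-x\{y,z\}$ once Lemma \ref{skew} is used to express them all through $x\{y,z\}$. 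The two contributions cancel, giving (\ref{R}). The defining identity $[[x,x],y]=0$ is then immediate, since $[x,x]=\tfrac12\,xx$ by iii), whence $[[x,x],y]$ is a sum of a triple $\mu$-product and $\{xx,y\}$, both zero by ii) and iv). This yields the claimed equivalence of categories.

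For the last assertion, the same expansion shows that $x\{y,z\}$ is a nonzero scalar multiple of $[x,[y,z]]+[[y,z],x]$, so the symmetry condition $x\{y,z\}=0$ is equivalent to $[x,[y,z]]+[[y,z],x]=0$ for all $x,y,z$. Together with the right Leibniz identity (\ref{R}), which holds in every Ronco algebra, this is precisely the left Leibniz identity, i.e.\ the defining condition of a symmetric Leibniz algebra in the sense of \cite{JPS}; and conversely every symmetric Leibniz algebra satisfies this relation. Hence symmetric $\mu$-algebras match symmetric Leibniz algebras. I expect the main obstacle to be the sign bookkeeping in axiom v) and, dually, in the Leibniz identity for the reconstructed bracket: on the Ronco side this is controlled by (\ref{mrv}), and on the $\mu$-side by the alternating property of Lemma \ref{skew}.
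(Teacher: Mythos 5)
Your proof is correct and follows essentially the same route as the paper (Propositions \ref{9} and \ref{10}): split the Leibniz bracket into its symmetric part $\mu$ and antisymmetric part $\{-,-\}$, check the $\mu$-algebra axioms using (\ref{an}) and (\ref{mrv}) together with the Leibniz identity, and reverse the process; your normalization differs from the paper's only by a factor of $2$, which is harmless since ${\rm char}\,K\neq 2$. You additionally spell out the identity $2x\{y,z\}=[x,[y,z]]+[[y,z],x]$ governing the symmetric case, which the paper leaves implicit.
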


The theorem is  a trivial consequence of Propositions \ref{9} and \ref{10}.
\begin{Pro}\label{9} For  a Ronco algebra $\g$, we put:
$$2 \lbrace x,y\rbrace= [x,y]-[y,x],$$
$$2xy=[x,y]+[y,x].$$
{\rm (}Thus $[x,y]=\lbrace x,y\rbrace +xy$.{\rm )} Then $\g$ together with operations $\lbrace -,-\rbrace$ and
$\mu(x,y)=xy$ is a $\mu$-algebra.
\end{Pro}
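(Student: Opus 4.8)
The plan is to verify the five identities (i)--(v) of Definition~\ref{detheta} one at a time, always translating $\mu$ and $\{-,-\}$ back into the Leibniz bracket through $2\,xy=[x,y]+[y,x]$ and $2\{x,y\}=[x,y]-[y,x]$ (legitimate since $\mathrm{char}\,K\neq2$). The single observation that dispatches four of the five identities is that a symmetric product is central: $2\,xy=[x,y]+[y,x]=\mu(x\odot y)$ lies in $\g^{\ann}$, and in a Ronco algebra $\g^{\ann}$ is a central two-sided ideal, since $[[x,x],z]=0$ by (\ref{mr}) and $[z,[x,x]]=0$ by (\ref{an}). Granting this, (i) and (iii) are immediate from the symmetry, resp.\ antisymmetry, built into the definitions; and for (ii), (iv) one simply notes that any bracket having a symmetric product as one of its entries vanishes, so $(xy)z$, $x(yz)$ and $\{xy,z\}$ are all zero. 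Thus the only identity requiring genuine computation is (v).

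For (v) the first step is to rewrite the bracketed-bracket terms. Using the second relation of (\ref{an}) one gets $[x,\{y,z\}]=[x,[y,z]]$, and using the Ronco consequence (\ref{mrv}) one gets $[\{y,z\},x]=[[y,z],x]$. Hence
$$\{x,\{y,z\}\}=\tfrac12\bigl([x,[y,z]]-[[y,z],x]\bigr),\qquad x\{y,z\}=\tfrac12\bigl([x,[y,z]]+[[y,z],x]\bigr).$$
The second step is to expand each $[x,[y,z]]$ by the Leibniz identity (\ref{R}) as $[[x,y],z]-[[x,z],y]$, turning both sides of (v) into linear combinations of the six iterated left brackets $[[a,b],c]$ coming from the permutations of $x,y,z$.

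The last step, which is the main (purely bookkeeping) obstacle, is to add the three cyclic summands on the left of (v) and recognise the right-hand side. Collecting coefficients, the cyclic sum collapses to $\tfrac12\bigl(-[[x,z],y]-[[z,y],x]-[[y,x],z]\bigr)$, and applying the antisymmetry (\ref{mrv}) in the first two slots, i.e.\ $[[a,b],c]=-[[b,a],c]$, rewrites this as $\tfrac12\bigl([[x,y],z]-[[x,z],y]+[[y,z],x]\bigr)$, which is exactly $x\{y,z\}$. As the computation invokes only (\ref{R}), (\ref{an}), (\ref{mr}) and (\ref{mrv}), it is valid for every Ronco algebra, which is what Proposition~\ref{9} asserts.
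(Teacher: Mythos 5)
Your proof is correct and follows essentially the same route as the paper: identities (i)--(iv) from the symmetry of the definitions together with $[xy,z]=0=[z,xy]$ (which the paper gets directly from (\ref{an}) and (\ref{mrv}), and you get by the equivalent remark that $xy\in\g^{\ann}$ is central), and identity (v) by reducing $x\{y,z\}$ and $\{x,\{y,z\}\}$ to iterated left brackets via (\ref{R}), (\ref{an}), (\ref{mrv}) and comparing cyclic sums. The only difference is bookkeeping: the paper records the two intermediate identities (\ref{ps}) and (\ref{ms}) and sums the latter cyclically, while you sum first and simplify afterwards; both computations agree.
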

\begin{proof} The relations i) and iii) are obvious. By the relation (\ref{an}) and  (\ref{mrv}) we have 
$$[xy,z]=0 \quad {\rm and} \quad [x,yz]=0.$$
Hence
$$2(xy)z=[xy,z]+[z,xy]=0+0=0.$$
Similarly,
$$2x(yz)=[x,yz]+[yz,x]=0$$
and ii) follows. Quite similarly, 
$$2 \lbrace xy,z\rbrace =[xy,z]-[z,xy]=0-0=0$$
and iv) follows.

Next, we have 
$$4x \lbrace y,z\rbrace=[x,[y,z]-[z,y]]+[[y,z]-[z,y],x]$$
By (\ref{an}) and (\ref{mrv}) we can rewrite:
$$4x \lbrace y,z\rbrace=2[x,[y,z]]+2[[y,z],x]=2[[x,y],z]-2[[x,z],y]+2[[y,z],x].$$
Let us use  (\ref{mrv}) once more, to obtain
\begin{equation}\label{ps}
2x \lbrace y,z\rbrace=[[x,y],z]+[[z,x],y]+[[y,z],x] =\sum_{cyclic}[[x,y], z] \end{equation}

Next, we consider
$$4 \lbrace x, \lbrace y,z\rbrace\rbrace=[x,[y,z]-[z,y]]-[[y,z]-[z,y],x]=2[x,[y,z]+2[[z,y],x]$$
and hence
$$2\lbrace x, \lbrace y,z\rbrace\rbrace=[[x,y],z]-[[x,z],y]+[[z,y],x]=[[x,y],z]+[[z,x],y]+[[z,y],x].$$
So, we proved
\begin{equation}\label{ms}2\lbrace x, \lbrace y,z\rbrace\rbrace=[[x,y],z]+[[z,x],y]-[[y,z],x]\end{equation}
This implies
$$2\sum_{cyclic}\lbrace x, \lbrace y,z\rbrace\rbrace= \sum_{cyclic}[[x,y],z]$$
This together  (\ref{ps}) implies v).

\end{proof}

\begin{Pro}\label{10} Let $\fm$ be a $\mu$-algebra. We put
$$[x,y]=\lbrace x,y\rbrace +xy.$$
Then $[-,-]$ defines a Ronco algebra   structure on $\fm$.
\end{Pro}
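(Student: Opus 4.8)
The plan is to verify directly the two conditions that make $\fm$, equipped with $[x,y]=\lbrace x,y\rbrace +xy$, a Ronco algebra: bilinearity is immediate since both operations are bilinear, so what remains is the Leibniz identity (\ref{R}) and the Ronco identity (\ref{mr}).

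First I would simplify all the iterated brackets so that every term containing a $\mu$-product nested inside a second operation is eliminated. Expanding $[x,[y,z]]=\lbrace x,\lbrace y,z\rbrace +yz\rbrace +x(\lbrace y,z\rbrace +yz)$ and discarding, via ii) the term $x(yz)$ and, via iv) together with iii) the term $\lbrace x,yz\rbrace$, leaves $[x,[y,z]]=\lbrace x,\lbrace y,z\rbrace\rbrace +x\lbrace y,z\rbrace$. The identical bookkeeping (using commutativity i) to write $\lbrace x,y\rbrace z=z\lbrace x,y\rbrace$, and iv) to kill $\lbrace xy,z\rbrace$ and $(xy)z$) gives $[[x,y],z]=\lbrace\lbrace x,y\rbrace ,z\rbrace +z\lbrace x,y\rbrace$ and $[[x,z],y]=\lbrace\lbrace x,z\rbrace ,y\rbrace +y\lbrace x,z\rbrace$.

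Next I would assemble the Leibniz defect $D:=[[x,y],z]-[[x,z],y]-[x,[y,z]]$ and split it into its $\mu$-part and its $\lbrace -,-\rbrace$-part. For the $\mu$-part $z\lbrace x,y\rbrace -y\lbrace x,z\rbrace -x\lbrace y,z\rbrace$, Lemma \ref{skew} says $(a,b,c)\mapsto a\lbrace b,c\rbrace$ is totally skew-symmetric, so $z\lbrace x,y\rbrace =x\lbrace y,z\rbrace$ (even permutation) and $y\lbrace x,z\rbrace =-x\lbrace y,z\rbrace$ (odd permutation), whence this part collapses to $x\lbrace y,z\rbrace$. For the $\lbrace -,-\rbrace$-part $\lbrace\lbrace x,y\rbrace ,z\rbrace -\lbrace\lbrace x,z\rbrace ,y\rbrace -\lbrace x,\lbrace y,z\rbrace\rbrace$, the antisymmetry iii) rewrites $\lbrace\lbrace x,y\rbrace ,z\rbrace =-\lbrace z,\lbrace x,y\rbrace\rbrace$ and $\lbrace\lbrace x,z\rbrace ,y\rbrace =\lbrace y,\lbrace z,x\rbrace\rbrace$, so this part equals $-\bigl(\lbrace x,\lbrace y,z\rbrace\rbrace +\lbrace z,\lbrace x,y\rbrace\rbrace +\lbrace y,\lbrace z,x\rbrace\rbrace\bigr)$, which by axiom v) is exactly $-x\lbrace y,z\rbrace$. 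Therefore $D=x\lbrace y,z\rbrace -x\lbrace y,z\rbrace =0$, establishing (\ref{R}). This cancellation is the heart of the matter: it is precisely the reverse of the computation that produced (\ref{ps}) and (\ref{ms}) in Proposition \ref{9}, and the main point requiring care is the sign bookkeeping from Lemma \ref{skew} together with ensuring that every mixed term $\lbrace\mu(\cdot),\cdot\rbrace$ and triple $\mu$-product has been removed by i)--iv) \emph{before} axiom v) is applied.

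Finally, for the Ronco identity (\ref{mr}): by iii) one has $\lbrace x,x\rbrace =0$, so $[x,x]=xx$, and then $[[x,x],y]=[xx,y]=\lbrace xx,y\rbrace +(xx)y=0$, the first summand vanishing by iv) and the second by ii). Since a Leibniz algebra satisfying (\ref{mr}) is exactly a Ronco algebra, this completes the verification. I do not expect any obstacle beyond the organization of the cancellation in the second step.
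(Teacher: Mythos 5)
Your proposal is correct and follows essentially the same route as the paper: expand the three double brackets, use i)--iv) to eliminate every term with a $\mu$-product nested inside another operation, then split the Leibniz defect into its $\mu$-part (handled by commutativity and the skew-symmetry of Lemma \ref{skew}) and its $\lbrace-,-\rbrace$-part (handled by antisymmetry and axiom v)), and finally check $[[x,x],y]=[xx,y]=0$ via ii) and iv). The only difference is presentational -- you name the two halves of the cancellation explicitly, which the paper does implicitly line by line.
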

\begin{proof} We have
$$[x,[y,z]=x(yz+\lbrace y,z\rbrace)+\lbrace x,yz+ \lbrace y,z\rbrace\rbrace= x\lbrace y,z\rbrace+\lbrace x,\lbrace y,z\rbrace\rbrace
$$
Quite similarly
$$[[x,y],z]= (xy+\lbrace x,y\rbrace) z +\lbrace xy+\lbrace x,y\rbrace,z\rbrace =  \lbrace x,y\rbrace  z +\lbrace \lbrace x,y\rbrace,z\rbrace$$
and
$$[[x,z],y]=  \lbrace x,z\rbrace  y +\lbrace \lbrace x,z\rbrace,y\rbrace$$
Hence
\begin{align*}
[[x,y],z]-[[x,y],z]+[[x,z],y]=&x\lbrace y,z\rbrace-\lbrace x,y\rbrace  z  +\lbrace x,z\rbrace  y \\
+&\lbrace x,\lbrace y,z\rbrace\rbrace -\lbrace \lbrace x,y\rbrace,z\rbrace +\lbrace \lbrace x,z\rbrace,y\rbrace
\end{align*}
\end{proof}
By i) of Definition \ref{detheta} and Lemma \ref{skew} the first line equals to $-x\lbrace y,z\rbrace$. By iii) of Definition \ref{detheta} the second line equals to 
$$\lbrace x,\lbrace y,z\rbrace\rbrace +\lbrace z, \lbrace x,y\rbrace\rbrace +\lbrace y, \lbrace z,x\rbrace\rbrace$$
which is equal to $x\lbrace y,z\rbrace$ thanks to vi) of Definition \ref{detheta}. It follows that $[[x,y],z]-[[x,y],z]+[[x,z],y]=0$ and hence $\fm$ is a Leibniz algebra. We also have
$[x,x]=xx$, hence $[[x,x],y]=[xx,y]=0$ and the result follows. 


\end{document}